\newtheorem{conj}{Conjecture}
\newtheorem{thm}{Theorem}[section]
\newtheorem{lem}[thm]{Lemma}
\newtheorem{cor}[thm]{Corollary}
\theoremstyle{definition}
\newcommand{\olapla}{\overline\Delta}
\newcommand{\onabla}{\overline\nabla}
\newcommand{\p}{\phi}
\title{Polyharmonic maps of order k\\ with finite $L^p$ k-energy
 into Euclidean spaces}
\author{Shun Maeta}
\thanks{}
\keywords{polyharmonic maps of order k, generalized Chen's conjecture}
\subjclass[2000]{primary 58E20, secondary 53C43}
\address{\footnotesize{Faculty of Tourism and Business Management Shumei University, Chiba 276-0003, Japan.}
 }
\email{shun.maeta@gmail.com~{\it or}~maeta@mailg.shumei-u.ac.jp}
\begin{document} 
\maketitle 
\markboth{polyharmonic maps of order k} 
{Shun Maeta} 

\begin{abstract} 
We consider polyharmonic maps $\phi:(M,g)\rightarrow \mathbb{E}^n$ of order $k$ from a complete Riemannian manifold into the Euclidean space and let $p$ be a real constant satisfying $1<p<\infty$.
$(i)$ If, 
$\int_M|W^{k-1}|^{p}dv_g<\infty,$
and
$\int_M|\onabla W^{k-2}|^2dv_g<\infty.$
 Then $\p$ is a polyharmonic map of order $k-1$.
$(ii)$ If, 
$\int_M|W^{k-1}|^{p}dv_g<\infty,$ 
and 
${\rm Vol}(M,g)=\infty.$
Then $\p$ is a polyharmonic map of order $k-1$.
Here, $W^s=\olapla^{s-1}\tau(\p)\ (s=1,2,\cdots)$ and $W^0=\p$.
As a corollary, we give an affirmative partial answer to generalized Chen's conjecture.
\end{abstract}

\qquad\\


\section{Introduction}\label{intro} 
The theory of harmonic maps has been applied into various fields in differential geometry.
 Harmonic maps between two Riemannian manifolds are
 critical points of the {\em energy} functional 
 $E(\p)=\frac{1}{2}\int_M|d\p|^2dv_g,$
  for smooth maps $\p:(M^m,g)\rightarrow (N^n,h)$ from an $m$-dimensional Riemannian manifold into an $n$-dimensional Riemannian manifold, 
 where $dv_g$ denotes the volume element of $g.$
The Euler-Lagrange equation of $E$ is $\tau(\p)={\rm Trace}\nabla d\p=0,$
where $\tau(\p)$ is called the {\em tension field} of $\p.$
 A map $\p:(M,g)\rightarrow (N,h)$ is called a {\em harmonic map} if $\tau(\p)=0.$

In 1983, J. Eells and L. Lemaire \cite{jell1} proposed the problem to consider polyharmonic maps of order $k$ ($k$-harmonic maps) which are critical points of the {\em $k$-energy} functional 
$E_k (\phi )=\frac{1}{2}\int_M |(d+d^*)^{k-2}\tau (\phi)| ^2 dv_g,$
 on the space of smooth maps between two Riemannian manifolds.
 Polyharmonic maps are, by definition, a generalization of harmonic maps.
 If $\p:(M,g)\rightarrow \mathbb{E}^n$ is a smooth map, then the Euler-Lagrange equation of $E_k$
 is 
 $$\olapla^{k-1}\tau(\p)=0,$$
  where $\olapla:=\displaystyle\sum^m_{i=1}\left(\onabla_{e_i}\onabla_{e_i}-\onabla_{\nabla_{e_i}e_i}\right)$, and $\onabla$ is the induced connection.

Polyharmonic maps of order $2$ are called biharmonic maps.
There are many studies for the biharmonic theory.
One of the most interesting problem in the biharmonic theory is Chen's conjecture. 
 In 1988, B. Y. Chen raised the following problem:
\vspace{5pt}

\begin{conj}
[\cite{byc1}]\label{Chen}
Any biharmonic submanifold in $\mathbb{E}^n$ is minimal. 
\end{conj}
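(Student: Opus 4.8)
The plan is to regard a biharmonic submanifold as an isometric immersion $\p:(M,g)\rightarrow\mathbb{E}^n$ whose bitension field vanishes, and to show that its tension field $\tau(\p)$ (a constant multiple of the mean curvature vector) is identically zero, which is exactly minimality. Because the target is flat, the biharmonic map equation collapses to $\olapla\,\tau(\p)=0$, where $\olapla$ is the rough Laplacian of the induced connection $\onabla$; equivalently, writing $\p=(\p^1,\dots,\p^n)$ in standard coordinates and using $\tau(\p)=\lapla\p$ componentwise, each coordinate function satisfies $\lapla^2\p^\alpha=0$. The goal is thus to upgrade ``each $\p^\alpha$ is biharmonic'' to ``each $\p^\alpha$ is harmonic,'' i.e. $\tau(\p)=0$.

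First I would record the Bochner-type identity obtained by tracing the second covariant derivative of $|\tau(\p)|^2$,
\begin{equation*}
\tfrac12\,\olapla|\tau(\p)|^2=\la\olapla\,\tau(\p),\tau(\p)\ra+|\onabla\tau(\p)|^2=|\onabla\tau(\p)|^2\ge 0,
\end{equation*}
where the last equality uses $\olapla\,\tau(\p)=0$; hence $|\tau(\p)|^2$ is subharmonic on $(M,g)$. On a closed $M$ this finishes the argument at once: integrating, the left-hand side vanishes by the divergence theorem, so $\onabla\tau(\p)\equiv0$. Even more directly, componentwise integration by parts (the Laplacian being self-adjoint with no boundary contribution) gives $\int_M|\lapla\p^\alpha|^2\,dv_g=\int_M\p^\alpha\,\lapla^2\p^\alpha\,dv_g=0$, whence $\tau(\p)=\lapla\p=0$ and the immersion is minimal. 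So the compact case is elementary, and the entire content of Chen's conjecture lies in the noncompact range.

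The genuine difficulty, and where I expect the main obstacle, is the \emph{complete, noncompact} case. Dropping compactness, the boundary term in the integration by parts no longer disappears, and one must instead test the subharmonicity identity against a family of Lipschitz cutoffs $\eta_R$, equal to $1$ on the geodesic ball $B_R$ and supported in $B_{2R}$. This yields a Caccioppoli-type estimate of the schematic form $\int_M\eta_R^2|\onabla\tau(\p)|^2\,dv_g\le C\int_M|\nabla\eta_R|^2\,|\tau(\p)|^2\,dv_g$, and the whole conjecture turns on letting $R\to\infty$ so that the right-hand side vanishes. Without any a priori bound on the growth of $\tau(\p)$ or on the volume growth of $(M,g)$, the cutoff error cannot be absorbed and the argument stalls; this is precisely the point at which Chen's conjecture is open in full generality. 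The way I would force the scheme through is to supply exactly such control, which is what the hypotheses of the present paper provide (specialized to $k=2$, where $W^1=\tau(\p)$ and $W^0=\p$): the finiteness $\int_M|W^{k-1}|^p\,dv_g<\infty$ combined with finite energy $\int_M|\onabla\p|^2\,dv_g<\infty$ lets one choose $\eta_R$ so that the cutoff error tends to $0$ (case $(i)$); alternatively $\vol(M,g)=\infty$ forces the subharmonic $|\tau(\p)|^2\in L^{p/2}$ to be constant and hence zero (case $(ii)$). In either case $\tau(\p)\equiv0$, giving an affirmative answer under these side conditions. The unconditional statement, however, remains the hard open endpoint where no such control is available, and I do not see how to remove these hypotheses.
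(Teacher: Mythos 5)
The statement you were asked about is Conjecture~\ref{Chen} --- Chen's conjecture --- which the paper does not prove and does not claim to prove: it is an open problem, and the paper only establishes conditional results in its direction (Theorem~\ref{main Th} and Corollary~\ref{main Cor}). Your proposal is therefore correct in its verdict: you rightly decline to manufacture an unconditional proof, your closed-manifold argument via $\int_M|\lapla\p^\alpha|^2\,dv_g=\int_M\p^\alpha\lapla^2\p^\alpha\,dv_g=0$ (hence $\tau(\p)=\lapla\p=0$) is sound, and your diagnosis that the entire difficulty sits in the complete noncompact case, where the cutoff error cannot be absorbed without extra control, is exactly where the problem stands. Your conditional sketch also matches the paper's actual mechanism at $k=2$: Lemma~\ref{key lem 1} is precisely your Caccioppoli estimate, obtained by testing $0=\int_M\la-\olapla\tau(\p),\lambda^2|\tau(\p)|^{p-2}\tau(\p)\ra\,dv_g$ against cutoffs $\lambda$ and treating $p\ge2$ and $1<p<2$ separately with Young's inequality, and it yields $\onabla_X\tau(\p)=0$ and $|\tau(\p)|$ constant.

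One correction of emphasis in your case~(i): the finite-energy hypothesis $\int_M|\onabla\p|^2\,dv_g<\infty$ is \emph{not} what kills the cutoff error --- the $L^p$ bound on $\tau(\p)$ alone does that in Lemma~\ref{key lem 1}. It enters afterward, in the step you did not supply: parallel $\tau(\p)$ with constant norm is not yet zero on a noncompact manifold of possibly finite volume, and the paper closes this gap by applying Gaffney's theorem (Theorem~\ref{Gaffney}) to the $1$-form $\omega(X)=|\tau(\p)|^{\frac{p}{2}-1}\la\onabla_X\p,\tau(\p)\ra$, whose integrability is exactly what the Cauchy--Schwarz pairing of the two hypotheses provides; computing $-\delta\omega=|\tau(\p)|^{\frac{p}{2}+1}$ and invoking $\int_M(-\delta\omega)\,dv_g=0$ then forces $\tau(\p)=0$. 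Your reading of case~(ii) (a constant $|\tau(\p)|$ with $\int_M|\tau(\p)|^p\,dv_g<\infty$ and $\vol(M,g)=\infty$ must vanish) agrees with the paper. In short: no genuine gap relative to what is provable --- you proved the compact case, correctly located the open endpoint, and your conditional outline coincides with the paper's strategy up to the misattributed role of the energy hypothesis.
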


\vspace{5pt}
Here, we say for a submanifold $M$ in $\mathbb{E}^n$, to be biharmonic if an isometric immersion $\p:(M,g)\rightarrow \mathbb{E}^n$ is biharmonic.
There are many affirmative partial answers to Chen's conjecture (cf. \cite{kasm1},\ \cite{Leuven},\ \cite{Dimi},\ \cite{Hasanis-Vlachos},\ \cite{sm10},\ \cite{nnhusg1},\ \cite{yl2013}, etc.)

 On the other hand, Chen's conjecture was generalized as follows (cf. \cite{sm2},\ see also \cite{byc2013}):
 \begin{conj}[\cite{sm2}]\label{generalized Chen}
 Any polyharmonic submanifold of order $k$ in $\mathbb{E}^n$ is minimal.
 \end{conj}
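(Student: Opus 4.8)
The plan is to attack the conjecture by \emph{iterated order reduction}, using the two theorems announced in the abstract as the inductive engine. I begin by recording the two reformulations that turn the conjecture into a statement about harmonicity. For an isometric immersion $\p:(M,g)\rightarrow\mathbb{E}^n$ one has $\tau(\p)=mH$, where $m=\dim M$ and $H$ is the mean curvature vector, so $\p$ is \emph{minimal} precisely when $\tau(\p)=W^1=0$, i.e. precisely when $\p$ is harmonic. On the other hand, $\p$ is \emph{polyharmonic of order $k$} precisely when $\olapla^{k-1}\tau(\p)=W^k=0$. Thus the conjecture asserts that $W^k=0$ forces $W^1=0$, and the strategy is to climb down the ladder $W^k=0\Rightarrow W^{k-1}=0\Rightarrow\cdots\Rightarrow W^1=0$ one rung at a time.

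First I would run the reduction one step. Assuming $W^k=0$, the section $W^{k-1}=\olapla^{k-2}\tau(\p)$ satisfies $\olapla W^{k-1}=W^k=0$; since the target is flat, the induced connection $\onabla$ on $\p^{-1}T\mathbb{E}^n$ is trivial, so each Euclidean component of $W^{k-1}$ is a Laplace--Beltrami harmonic function on $(M,g)$. Feeding this structural information into part $(i)$ (with $W^{k-1}\in L^p$ and $\onabla W^{k-2}\in L^2$) or into part $(ii)$ (with $\vol(M,g)=\infty$) yields $W^{k-1}=0$, i.e. $\p$ is polyharmonic of order $k-1$. I would then iterate: the same dichotomy applies with $k$ replaced by $k-1$, so $\olapla W^{k-2}=W^{k-1}=0$ and a second application of $(i)$ or $(ii)$ gives $W^{k-2}=0$. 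Repeating $k-1$ times descends through $W^{k-1}=W^{k-2}=\cdots=W^1=0$, and $W^1=\tau(\p)=0$ is exactly minimality.

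In the compact case the loop closes cleanly: $\vol(M,g)<\infty$ is finite and all the sections $W^s$ are smooth and bounded, so every integrability hypothesis of $(i)$ holds automatically at each stage and the iteration terminates, recovering the known compact results. The whole difficulty therefore concentrates on the noncompact complete case.

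The hard part, which is exactly where the argument must stall, is discharging the integrability and volume hypotheses \emph{unconditionally} for an arbitrary complete submanifold. For a noncompact complete $M$ there is no a priori reason that $\int_M|W^{s}|^p\,dv_g<\infty$ or that $\onabla W^{s-1}\in L^2$, nor that $\vol(M,g)=\infty$; and even if one verifies these at the top rung, propagating the finiteness downward as the order drops requires new global estimates that the polyharmonic equation $\olapla^{k-1}\tau(\p)=0$ alone does not furnish. It is precisely this gap that keeps the generalized conjecture open in full generality: the reduction scheme above rigorously proves only the conditional statements $(i)$ and $(ii)$ of the abstract, and the corollary it yields is accordingly a \emph{partial} affirmative answer, obtained by imposing the missing growth behaviour of $W^{k-1}$ (finite $L^p$ $k$-energy, or infinite volume) as an explicit extra assumption rather than deriving it from the polyharmonicity of $\p$.
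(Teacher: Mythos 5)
The statement you were asked to prove is a conjecture, and your proposal correctly declines to prove it: your iterated order-reduction ladder $W^k=0\Rightarrow W^{k-1}=0\Rightarrow\cdots\Rightarrow W^1=\tau(\p)=0$, driven by parts $(i)$ and $(ii)$, is exactly the paper's own route to its partial answer (Corollary~\ref{main Cor}, obtained by repeatedly applying Theorem~\ref{main Th}), and your diagnosis of the obstruction --- that the $L^p$ and $L^2$ integrability (or infinite-volume) hypotheses cannot be discharged unconditionally, and must be re-imposed at every rung of the descent --- is precisely why the paper claims only a conditional corollary and the conjecture remains open. Since your scheme coincides with the paper's (it is silent, as you are, on how Theorem~\ref{main Th} itself is established via the cutoff/Young's-inequality Lemma~\ref{key lem 1} and Gaffney's theorem, but you use it only as a black box), there is no gap to report.
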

 
 The author showed that polyharmonic curves parametrized by arc length of order $k$ is a straight line (cf.\ \cite{sm2}).
Recently, N. Nakauchi and H. Urakawa gave an affirmative partial answer to Conjecture $\ref{generalized Chen}$ as follows.

\begin{thm}[\cite{nnhu2013}]\label{Th of N-U}
Let $\p:(M,g)\rightarrow \mathbb{E}^n$ be a polyharmonic map of order $k$, that is, $\olapla^{k-1} \tau(\p)=0$, from  a complete Riemannian manifold $(M,g)$ into $\mathbb{E}^n$.

$(i)$ If  
$$\int_M|W^{q-1}|^{2}dv_g<\infty,\ \ \ \ \ \text{for~all}~q=2, 3,\cdots, k,$$
and
$$\int_M|\onabla W^{q-1}|^2dv_g<\infty,\ \ \ \ \ \text{for~all}~q=1,2,\cdots, k-1.$$
Then $\p$ is harmonic.

$(ii)$ If  
$$\int_M|W^{q-1}|^{2}dv_g<\infty,\ \ \ \ \ \text{for~all}~q=2, 3,\cdots, k,$$
and 
$${\rm Vol}(M,g)=\infty.$$
Then $\p$ is harmonic.

Here, $W^s=\olapla^{s-1}\tau(\p)~ (s=1,2,\cdots)$ and $W^0=\p$.

\end{thm}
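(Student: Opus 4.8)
The plan is to exploit that the target is flat: every field $W^s$ is a section of the flat pulled-back bundle $\p^{-1}T\mathbb{E}^n$, the operator $\olapla$ is the induced (rough) Laplacian, and by the very definition of the $W^s$ one has $\olapla W^s=W^{s+1}$ for every $s\ge 0$ (with $\olapla W^0=\olapla\p=\tau(\p)=W^1$), while the polyharmonicity reads $\olapla W^{k-1}=W^k=0$. The goal is to peel the fields off one index at a time, from the top down, proving $W^{k-1}=W^{k-2}=\cdots=W^1=0$; the last equality is precisely $\tau(\p)=0$, i.e.\ harmonicity. The single analytic engine throughout will be a cutoff integration by parts on the complete manifold $(M,g)$: fix $x_0\in M$, choose $\eta=\eta_R$ with $\eta\equiv 1$ on the geodesic ball $B_R$, $\mathrm{supp}\,\eta\subset B_{2R}$ and $|\nabla\eta|\le C/R$, and let $R\to\infty$.

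First I would treat the top level $s=k-1$. Pairing $\olapla W^{k-1}=0$ against $\eta^2 W^{k-1}$ and integrating by parts (no boundary term, as $\eta$ has compact support) gives
$$\int_M\eta^2|\onabla W^{k-1}|^2\,dv_g=-2\int_M\eta\,\langle\onabla W^{k-1},d\eta\otimes W^{k-1}\rangle\,dv_g,$$
and Cauchy--Schwarz together with Young's inequality bounds the right-hand side by $\tfrac12\int_M\eta^2|\onabla W^{k-1}|^2+\tfrac{2C^2}{R^2}\int_{B_{2R}\setminus B_R}|W^{k-1}|^2$. Absorbing the first term and letting $R\to\infty$, the hypothesis $\int_M|W^{k-1}|^2\,dv_g<\infty$ forces the annular tail to vanish, whence $\onabla W^{k-1}=0$, i.e.\ $W^{k-1}$ is parallel and $|W^{k-1}|$ is constant.

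To upgrade ``parallel'' to ``zero'' the two cases diverge. In case $(ii)$ a nonzero constant $|W^{k-1}|$ would give $\int_M|W^{k-1}|^2=|W^{k-1}|^2\,\mathrm{Vol}(M,g)=\infty$, contradicting finiteness, so $W^{k-1}=0$. In case $(i)$, where the volume may be finite, I would instead pair $W^{k-1}$ with the level below: since $\olapla W^{k-2}=W^{k-1}$ and $W^{k-1}$ is parallel,
$$\int_M\eta^2|W^{k-1}|^2\,dv_g=\int_M\eta^2\langle\olapla W^{k-2},W^{k-1}\rangle\,dv_g=-2\int_M\eta\,\langle\onabla W^{k-2},d\eta\otimes W^{k-1}\rangle\,dv_g,$$
and Cauchy--Schwarz bounds this by $2\bigl(\int_M\eta^2|\onabla W^{k-2}|^2\bigr)^{1/2}\bigl(\tfrac{C^2}{R^2}\int_{B_{2R}\setminus B_R}|W^{k-1}|^2\bigr)^{1/2}$; the hypotheses $\int_M|\onabla W^{k-2}|^2<\infty$ and $\int_M|W^{k-1}|^2<\infty$ send the right-hand side to $0$, so $W^{k-1}=0$ again.

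This two-step mechanism then runs as a backward induction: assuming $W^{s+1}=0$, the identity $\olapla W^s=W^{s+1}=0$ lets the first argument (using $\int_M|W^s|^2<\infty$) show $W^s$ parallel, and the second argument (using either $\mathrm{Vol}(M,g)=\infty$ in case $(ii)$, or $\olapla W^{s-1}=W^s$ with $\int_M|\onabla W^{s-1}|^2<\infty$ in case $(i)$) shows $W^s=0$; descending from $s=k-1$ to $s=1$ yields $\tau(\p)=W^1=0$. I expect the main obstacle to be the rigorous justification of these integrations by parts on a complete, possibly noncompact $M$ without boundary control: one must check that every cutoff error term genuinely vanishes in the limit, which is exactly what the stated $L^2$-finiteness of $W^{q-1}$ and $\onabla W^{q-1}$ is calibrated to guarantee, and one must track which hypothesis is consumed at each index so that the induction never outruns the available integrability---in particular at the boundary index $q=1$, where $W^0=\p$ and $\int_M|\onabla\p|^2=\int_M|d\p|^2<\infty$ is the finite-energy assumption that closes the final step.
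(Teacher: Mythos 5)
Your proof is correct, and it is essentially the route the paper takes --- note that the paper does not reprove this cited theorem of Nakauchi--Urakawa directly, but its own machinery (Lemma~\ref{key lem 1} and the proof of Theorem~\ref{main Th}, specialized to $p=2$ and iterated as in Corollary~\ref{main Cor}) constitutes exactly such a proof: a cutoff/Young absorption argument showing the top field $W^{k-1}$ is parallel, the infinite-volume dichotomy for case $(ii)$, and a one-step descent repeated down to $\tau(\p)=W^1=0$. The one genuine difference is in case $(i)$: where you run a second explicit cutoff integration by parts on $\int_M\eta^2\langle\olapla W^{k-2},W^{k-1}\rangle\,dv_g$ and let $R\to\infty$, the paper instead invokes Gaffney's version of Stokes' theorem for the $1$-form $\omega(X)=|W^{k-1}|^{\frac{p}{2}-1}\langle\onabla_X W^{k-2},W^{k-1}\rangle$, computing $-\delta\omega=|W^{k-1}|^{\frac{p}{2}+1}$ from parallelism and concluding $\int_M|W^{k-1}|^{\frac{p}{2}+1}\,dv_g=0$. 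At $p=2$ these are interchangeable --- your cutoff computation is in effect Gaffney's theorem inlined for this particular $\omega$, so your argument is slightly more self-contained, while the Gaffney formulation is what lets the paper carry the argument over to general $L^p$ energy ($1<p<\infty$) with the weight $|W^{k-1}|^{\frac p2-1}$ handled cleanly (including the two-case analysis $p\ge 2$ versus $p<2$ in Lemma~\ref{key lem 1}, which your $p=2$ setting sidesteps). Your bookkeeping of which hypothesis is consumed at each index, including the boundary step $s=1$ where $\int_M|\onabla W^0|^2\,dv_g=\int_M|d\p|^2\,dv_g<\infty$ is the $q=1$ assumption, matches the statement exactly.
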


\vspace{5pt}

Our main result of this paper is the following.

\vspace{5pt}

\begin{thm}\label{main Th}
Let $\p:(M,g)\rightarrow \mathbb{E}^n$ be a polyharmonic map of order $k$, that is, $\olapla^{k-1} \tau(\p)=0$, from  a complete Riemannian manifold $(M,g)$ into $\mathbb{E}^n$ and let $p$ be a real constant satisfying $1<p<\infty$.

$(i)$ If
$$\int_M|W^{k-1}|^{p}dv_g<\infty,$$
and
$$\int_M|\onabla W^{k-2}|^2dv_g<\infty.$$
Then $\p$ is a polyharmonic map of order $k-1$.

$(ii)$ If
$$\int_M|W^{k-1}|^{p}dv_g<\infty,$$
and
$${\rm Vol}(M,g)=\infty.$$
Then $\p$ is a polyharmonic map of order $k-1$.

Here, $W^{s}=\olapla^{s-1}\tau(\p)$ $(s=1,2,\cdots)$ and $W^0=\p$.

\end{thm}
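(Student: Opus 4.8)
The plan is to reduce everything to a Liouville-type vanishing statement for the single section $u:=W^{k-1}$. First I would observe that the hypothesis $\olapla^{k-1}\tau(\p)=0$ says precisely $\olapla W^{k-1}=\olapla\big(\olapla^{k-2}\tau(\p)\big)=W^{k}=0$, while the desired conclusion that $\p$ be polyharmonic of order $k-1$ is exactly $W^{k-1}=0$. Since the target is $\mathbb{E}^n$, the bundle $\p^{-1}T\mathbb{E}^n$ is flat and $\onabla$ is the trivial connection, so $u$ may be regarded as an $\R^n$-valued map whose components are harmonic functions on $(M,g)$ (in particular $u$ is smooth by elliptic regularity), and $\olapla$ obeys $\int_M\la\olapla u,v\ra\,dv_g=-\int_M\la\onabla u,\onabla v\ra\,dv_g$ for compactly supported $v$. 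The whole problem becomes: show that a $\olapla$-harmonic $u$ with $\int_M|u|^p\,dv_g<\infty$ vanishes, under either extra hypothesis.

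The main step is a Caccioppoli/Rellich-type gradient estimate. Fix $x_0\in M$ and a cutoff $\eta$ with $0\le\eta\le1$, $\eta\equiv1$ on the geodesic ball $B_R=B(x_0,R)$, $\mathrm{supp}\,\eta\subset B_{2R}$ and $|\nabla\eta|\le C/R$; completeness guarantees such a compactly supported $\eta$. For $\epsilon>0$ I would test $\olapla u=0$ against $\eta^2(|u|^2+\epsilon)^{(p-2)/2}u$ and integrate by parts. Expanding $\onabla\big(\eta^2(|u|^2+\epsilon)^{(p-2)/2}u\big)$ produces three terms; using $|\la\onabla u,u\ra|^2\le|u|^2|\onabla u|^2\le(|u|^2+\epsilon)|\onabla u|^2$ to control the term carrying the factor $(p-2)$ — with the inequality pointing the right way in each of the regimes $p\ge2$ and $1<p<2$ — the good term survives with a positive coefficient $\min(1,p-1)$, which is exactly where the assumption $1<p<\infty$ enters. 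Cauchy–Schwarz and Young's inequality on the remaining $\nabla\eta$-term allow me to absorb, and after letting $\epsilon\to0$ (Fatou on the left, dominated convergence on the right) I expect to reach
$$\int_{B_R}|u|^{p-2}|\onabla u|^2\,dv_g\le\frac{C'}{R^2}\int_M|u|^p\,dv_g.$$

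With the estimate in hand I would let $R\to\infty$. Finiteness of $\int_M|u|^p\,dv_g$ kills the right-hand side, so $\int_M|u|^{p-2}|\onabla u|^2\,dv_g=0$; since $|\onabla(|u|^{p/2})|^2\le\frac{p^2}{4}|u|^{p-2}|\onabla u|^2$, the function $|u|^{p/2}$, hence $|u|$, is constant, and on the open set $\{u\neq0\}$ one has $\onabla u=0$. Assuming $M$ connected, either $u\equiv0$ (done) or $u$ is a nonzero parallel section with $|u|\equiv c>0$. In case $(ii)$, $\int_M|u|^p\,dv_g=c^p\vol(M,g)<\infty$ together with $\vol(M,g)=\infty$ forces $c=0$; this simultaneously disposes of the infinite-volume subcase of $(i)$. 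It remains to treat $(i)$ when $\vol(M,g)<\infty$: here I would use $u=\olapla W^{k-2}$, testing it against $\eta^2u$ and integrating by parts. Because $\onabla u=0$, only the $\nabla\eta$-term remains, giving $c^2\int_M\eta^2\,dv_g\le 2c\big(\int_M|\onabla W^{k-2}|^2\,dv_g\big)^{1/2}\big(\tfrac{C}{R^2}\vol(B_{2R})\big)^{1/2}$; as $R\to\infty$ the left side tends to $c^2\vol(M,g)$ while the right side tends to $0$ by $\int_M|\onabla W^{k-2}|^2\,dv_g<\infty$, whence $c=0$ and $W^{k-1}=0$.

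I expect the \emph{main obstacle} to be the regime $1<p<2$: there $|u|^{p-2}$ is singular at the zeros of $u$, so the naive test function is inadmissible and the $\epsilon$-regularization together with careful sign bookkeeping — to keep the coefficient $\min(1,p-1)$ strictly positive and to justify passing to the limit $\epsilon\to0$ — is essential. A secondary delicate point is the closing step of case $(i)$, where the hypothesis $\int_M|\onabla W^{k-2}|^2\,dv_g<\infty$ is precisely what excludes a nonzero parallel limit when the volume is finite.
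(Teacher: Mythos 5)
Your proposal is correct, and its core — the cutoff/Caccioppoli estimate testing $\olapla W^{k-1}=0$ against (a regularized version of) $\lambda^2|W^{k-1}|^{p-2}W^{k-1}$, with Cauchy--Schwarz on the $(p-2)$-term and Young's inequality with small $\varepsilon$ yielding the positive coefficient $p-1-\varepsilon$ — is exactly the paper's Lemma~3.1, case split $p\ge 2$ versus $1<p<2$ included; your case $(ii)$ argument ($c^p\,{\rm Vol}(M,g)=\infty$ contradiction) is also identical. Where you genuinely diverge is the closing of case $(i)$: the paper applies Gaffney's Stokes-type theorem to the $1$-form $\omega(X)=|W^{k-1}|^{\frac{p}{2}-1}\langle\onabla_X W^{k-2},W^{k-1}\rangle$, verifying $\int_M|\omega|\,dv_g<\infty$ by Cauchy--Schwarz from the two $L^2$/$L^p$ hypotheses and computing $-\delta\omega=|W^{k-1}|^{\frac{p}{2}+1}$ via the parallelism of $W^{k-1}$, so that $\int_M|W^{k-1}|^{\frac{p}{2}+1}dv_g=0$; you instead split into the infinite-volume subcase (legitimately absorbed into the case-$(ii)$ argument, since $(i)$ also assumes $L^p$-finiteness) and the finite-volume subcase, which you close by a second direct cutoff integration by parts of $\langle\olapla W^{k-2},\eta^2 W^{k-1}\rangle$ — in effect re-proving by hand the one instance of Gaffney's theorem that is needed, making the proof self-contained at the cost of the volume dichotomy. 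Two further small points in your favor: your $\epsilon$-regularization $(|u|^2+\epsilon)^{(p-2)/2}$ fixes a rigor gap the paper glosses over for $1<p<2$, where the paper's test section $\lambda^2|W^{k-1}|^{p-2}W^{k-1}$ can be singular at zeros of $W^{k-1}$; and your explicit connectedness argument for ``$|u|$ constant, hence $u$ parallel or identically zero'' spells out what the paper leaves implicit. (Minor typo only: your displayed bound should carry $\bigl(\tfrac{C^2}{R^2}{\rm Vol}(B_{2R})\bigr)^{1/2}$, which does not affect the limit.)
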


\vspace{5pt}

By Theorem $\ref{main Th}$, we obtain the following corollary.

\vspace{5pt}

\begin{cor}\label{main Cor}
Let $\p:(M,g)\rightarrow \mathbb{E}^n$ be a polyharmonic map of order $k$, that is, $\olapla^{k-1} \tau(\p)=0$, from  a complete Riemannian manifold $(M,g)$ into $\mathbb{E}^n$  and let $p$ be a real constant satisfying $1<p<\infty$.

$(i)$ If
$$\int_M|W^{q-1}|^{p}dv_g<\infty,\ \ \ \ \ \text{for~all}~q=2,3,\cdots, k,$$
and
$$\int_M|\onabla W^{q-1}|^2dv_g<\infty,\ \ \ \ \ \text{for~all}~q=1,2,\cdots,k-1.$$
Then $\p$ is harmonic.

$(ii)$ If
$$\int_M|W^{q-1}|^{p}dv_g<\infty,\ \ \ \ \ \text{for~all}~q=2,3,\cdots, k,$$
and
$${\rm Vol}(M,g)=\infty.$$
Then $\p$ is harmonic.

Here, $W^s=\olapla^{s-1}\tau(\p)~ (s=1,2,\cdots)$ and $W^0=\p$.

\end{cor}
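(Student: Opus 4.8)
The plan is to derive Corollary \ref{main Cor} from Theorem \ref{main Th} by a finite downward induction on the polyharmonic order, stripping off one order at each step. Recall that in the notation $W^s=\olapla^{s-1}\tau(\p)$, being polyharmonic of order $\ell$ means exactly $W^\ell=0$, while harmonicity is the statement $W^1=\tau(\p)=0$. Thus the goal is to begin from the hypothesis $W^k=0$ and descend all the way to $W^1=0$, using Theorem \ref{main Th} to lower the order by one each time.

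First I would record that the hypotheses of Corollary \ref{main Cor}(i) are tailored precisely so that the hypotheses of Theorem \ref{main Th}(i) are available at every intermediate order. Indeed, the assumption $\int_M|W^{q-1}|^p dv_g<\infty$ for $q=2,\ldots,k$ supplies $\int_M|W^j|^p dv_g<\infty$ for every $j\in\{1,\ldots,k-1\}$, and the assumption $\int_M|\onabla W^{q-1}|^2 dv_g<\infty$ for $q=1,\ldots,k-1$ supplies $\int_M|\onabla W^j|^2 dv_g<\infty$ for every $j\in\{0,\ldots,k-2\}$.

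Next I would run the iteration. The step that invokes Theorem \ref{main Th}(i) with order $\ell$ (for $\ell=k,k-1,\ldots,2$) requires the finiteness of $\int_M|W^{\ell-1}|^p dv_g$ and of $\int_M|\onabla W^{\ell-2}|^2 dv_g$; since $\ell-1$ ranges over $\{1,\ldots,k-1\}$ and $\ell-2$ over $\{0,\ldots,k-2\}$, these are exactly the bounds established above. Starting from $W^k=0$, the first application yields $W^{k-1}=0$, i.e.\ $\p$ is polyharmonic of order $k-1$; the next yields $W^{k-2}=0$; and after $k-1$ applications one reaches $W^1=0$, so $\p$ is harmonic.

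For part (ii) the argument has the same structure, the only difference being that the second hypothesis at each stage is the intrinsic condition ${\rm Vol}(M,g)=\infty$, which does not depend on the order and is therefore automatically preserved throughout the recursion, while the $L^p$ bounds on $W^1,\ldots,W^{k-1}$ again feed each application of Theorem \ref{main Th}(ii). I do not expect a genuine obstacle here: the substance of the corollary lies entirely in Theorem \ref{main Th}, and the only thing to verify is that no step of the descent calls for an integrability bound outside the supplied index ranges. The bookkeeping above confirms that it does not, so the corollary follows as a clean iteration.
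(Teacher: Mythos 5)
Your proposal is correct and coincides with the paper's own proof, which likewise derives the corollary by iterating Theorem \ref{main Th} to obtain $W^{k-1}=0$, $W^{k-2}=0$, $\ldots$, $W^1=\tau(\p)=0$. Your version is in fact more careful than the paper's terse ``by repeating this procedure,'' since you explicitly verify that the index ranges of the $L^p$ and $L^2$ hypotheses cover every stage of the descent.
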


 This corollary is a generalization of Theorem $\ref{Th of N-U}$
 and give an affirmative partial answer to generalized Chen's conjecture.

The remaining sections are organized as follows. 
Section~$\ref{Pre}$ contains some necessary definitions and preliminary geometric results.
 In section~$\ref{map}$, we show our main theorem.

\quad\\

\noindent
{\bf Acknowledgment\\}
The author would like to express his  gratitude to
 Professor Nobumitsu Nakauchi and Professor Hajime Urakawa for their useful advice.

\qquad\\

\section{Preliminaries}\label{Pre} 

In this section, we recall polyharmonic maps.

Let $(M,g)$ be an $m$-dimensional Riemannian manifold.
Assume that $\p:(M,g)\rightarrow \mathbb{E}^n$ be a smooth map.
We denote by $\nabla$ the Levi-Civita connection on $(M,g)$ and by $\onabla$ the induced connection.

\vspace{10pt}

Let us recall the definition of a harmonic map $\p:(M,g)\rightarrow \mathbb{E}^n$.
For a smooth map $\phi:(M,g)\rightarrow \mathbb{E}^n$, the {\em energy} of $\phi$ is defined by
$$E(\phi) =\frac{1}{2}\int_M|d\phi|^2 dv_g.$$
The Euler-Lagrange equation of $E$ is 
$$ \tau(\p)=\displaystyle \sum^m_{i=1}\{\onabla_{e_i}d\p(e_i)-d\p(\nabla_{e_i}e_i)\}=0,$$
where $ \tau(\p)$ is called the {\em tension field} of $\p$ and $\{e_i\}_{i=1}^m$ is an orthonormal frame field on $M$.
 A map $\p$ is called a {\em harmonic map} if $\tau(\p)=0$. 

\vspace{10pt}

In 1983, J. Eells and L. Lemaire \cite{jell1} proposed the problem to consider polyharmonic maps of order $k$ ($k$-harmonic maps) which are critical points of the {\em $k$-energy} functional 
$E_k (\phi )=\frac{1}{2}\int_M |(d+d^*)^{k-2}\tau (\phi)| ^2 dv_g,$
 on the space of smooth maps $\p:(M,g)\rightarrow \mathbb{E}^n$.
The Euler-Lagrange equation of $E_k$ is 
$$\olapla^{k-1} \tau(\p)=0.$$
For convenience we define as follows.
\begin{align*}
W^q&:=\olapla^{q-1}\tau(\p)=\underbrace{\olapla\cdots\olapla}_{q-1}\tau(\p),\\
W^0&:=\p.
\end{align*}

\qquad\\


\section{Proof of main theorem}\label{map} 
In this section, we shall give a proof of our main theorem (Theorem $\ref{main Th}$).
We first show the following lemma.
\vspace{5pt}

\begin{lem}\label{key lem 1}
Let $\p:(M,g)\rightarrow \mathbb{E}^n$ be a polyharmonic map of order $k$ from  a complete Riemannian manifold $(M,g)$ into $\mathbb{E}^n$.

Assume that $p$ satisfies $1<p<\infty$.
If such an $p$,
$$\int_M| W^{k-1}|^{p}dv_g<\infty,$$
then $\onabla_X W^{k-1}=0$ for any vector field $X$ on $M$. In particular, $| W^{k-1}|$ is constant.
Here, $W^s=\olapla^{s-1}\tau(\p)$ $(s=1,2,\cdots)$ and $W^0=\p$.
\end{lem}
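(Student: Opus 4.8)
The plan is to use the polyharmonic equation to reduce the statement to a Liouville-type property for a section killed by the rough Laplacian, and then to run a weighted Caccioppoli argument with a cutoff. First I would set $V:=W^{k-1}=\olapla^{k-2}\tau(\p)$ and observe that the hypothesis $\olapla^{k-1}\tau(\p)=0$ says precisely $\olapla V=W^k=0$. Since the target is flat, the whole problem reduces to the following: a section $V$ of $\p^{-1}T\mathbb{E}^n$ with $\olapla V=0$ and $\int_M|V|^p\,dv_g<\infty$, where $1<p<\infty$, must be parallel.

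For the estimate I would fix a base point, take the standard cutoff $\eta=\eta_R$ equal to $1$ on the geodesic ball $B_R$, supported in $B_{2R}$, with $|\nabla\eta_R|\le C/R$ (here completeness of $(M,g)$ is used), and test $\olapla V=0$ against the regularized section $\eta^2(|V|^2+\delta^2)^{\frac{p-2}{2}}V$, the regularization by $\delta>0$ being needed so that the test section is smooth across the zero set of $V$ when $p<2$. Integrating by parts and expanding $\onabla\big[(|V|^2+\delta^2)^{\frac{p-2}{2}}V\big]$ gives
\[
\int_M \eta^2(|V|^2+\delta^2)^{\frac{p-2}{2}}|\onabla V|^2
+(p-2)\int_M \eta^2(|V|^2+\delta^2)^{\frac{p-4}{2}}|\la\onabla V,V\ra|^2
+2\int_M \eta(|V|^2+\delta^2)^{\frac{p-2}{2}}\la\onabla V,\nabla\eta\otimes V\ra =0 .
\]
The crucial point is the sign of the first two terms: using the Kato-type inequality $|\la\onabla V,V\ra|^2\le|V|^2|\onabla V|^2\le(|V|^2+\delta^2)|\onabla V|^2$, their sum is bounded below by $c_p\int_M\eta^2(|V|^2+\delta^2)^{\frac{p-2}{2}}|\onabla V|^2$ with $c_p=\min\{1,p-1\}>0$. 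This is exactly where the hypothesis $p>1$ enters, and where the delicate range $1<p<2$ is absorbed. Controlling the $\nabla\eta$-term by Young's inequality then yields
\[
\frac{c_p}{2}\int_M \eta^2(|V|^2+\delta^2)^{\frac{p-2}{2}}|\onabla V|^2\,dv_g
\le \frac{2}{c_p}\int_M |\nabla\eta|^2(|V|^2+\delta^2)^{\frac{p}{2}}\,dv_g .
\]

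To finish I would let $\delta\to0$ (monotone convergence on each side) and then $R\to\infty$: since $|\nabla\eta_R|^2\le C^2/R^2$ and $\int_M|V|^p\,dv_g<\infty$, the right-hand side tends to $0$, forcing $|V|^{p-2}|\onabla V|^2\equiv0$, hence $\onabla V=0$ on the open set $\{V\neq0\}$. There $\nabla|V|=\la\onabla V,V\ra/|V|=0$, so $|V|$ is locally constant, and a standard continuity/connectedness argument (the value of $|V|$ cannot fall to $0$ across $\partial\{V\neq0\}$ unless $\{V\neq0\}$ is empty or all of $M$) gives $\onabla V\equiv0$ on $M$ and $|V|$ constant, which is the assertion. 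I expect the main obstacle to be purely technical: keeping the sign of the $(p-2)$-term under control uniformly in $\delta$ through Kato's inequality, and justifying the passage $\delta\to0$ near the zero set of $V$ when $1<p<2$; the outer limit $R\to\infty$ is the routine Caccioppoli part.
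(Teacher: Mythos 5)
Your proposal is correct and takes essentially the same route as the paper's own proof: both test $\olapla W^{k-1}=0$ against the section $\lambda^2|W^{k-1}|^{p-2}W^{k-1}$, absorb the $(p-2)$-term using $p>1$ (the paper splits into the cases $p\geq 2$ and $p<2$ with Young's and Cauchy--Schwarz inequalities, which amounts to your constant $c_p=\min\{1,p-1\}$), and let the cutoff radius tend to infinity to conclude $\onabla W^{k-1}=0$. Your $\delta$-regularization of $|V|^{p-2}$ and the connectedness argument on the zero set are careful technical refinements that the paper silently glosses over, but they do not change the method.
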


\begin{proof}
For a fixed point $x_0\in M$, and for every $0<r<\infty,$
 we first take a cut off function $\lambda$ on $M$ satisfying that 
 \begin{equation}
\left\{
 \begin{aligned}
&0\leq\lambda(x)\leq1\ \ \ (x\in M),\\
&\lambda(x)=1\ \ \ \ \ \ \ \ \ (x\in B_r(x_0)),\\
&\lambda(x)=0\ \ \ \ \ \ \ \ \ (x\not\in B_{2r}(x_0)),\\
&|\nabla\lambda|\leq\frac{C}{r}\ \ \ \ \ \ \ (x\in M),\ \ \ \text{for some constant $C$ independent of $r$},
\end{aligned} 
\right.
\end{equation}
where $B_r(x_0)$ and  $B_{2r}(x_0)$ are the balls centered at a fixed point $x_0\in M$ with radius $r$ and $2r$ respectively (cf. \cite{ka1}).
Since $\olapla W^{k-1}=W^{k}=\olapla^{k-1}\tau(\p)=0$, we have
\begin{equation}\label{0}
\begin{aligned}
0=&\int_M\langle -\olapla W^{k-1},\lambda^2| W^{k-1}|^{p-2}W^{k-1}\rangle dv_g.
\end{aligned}
\end{equation}
By $(\ref{0})$, we have
\begin{equation}\label{branch}
\begin{aligned}
0=&\int_M\langle -\olapla W^{k-1},\lambda^2| W^{k-1}|^{p-2} W^{k-1}\rangle dv_g\\
=&\int_M\langle \onabla W^{k-1},\onabla(\lambda^2| W^{k-1}|^{p-2} W^{k-1})\rangle dv_g\\
=&\int_M\sum_{i=1}^m\langle \onabla_{e_i} W^{k-1},(e_i\lambda^2)| W^{k-1}|^{p-2} W^{k-1}
+\lambda^2e_i\{(| W^{k-1}|^2)^{\frac{p-2}{2}}\} W^{k-1}\\
&\hspace{180pt}+\lambda^2| W^{k-1}|^{p-2}\onabla_{e_i} W^{k-1}\rangle dv_g\\
=&\int_M\sum_{i=1}^m\langle \onabla_{e_i} W^{k-1}, 2\lambda(e_i\lambda)| W^{k-1}|^{p-2} W^{k-1}\rangle dv_g\\
&+\int_M\sum_{i=1}^m\langle \onabla_{e_i} W^{k-1},\lambda^2(p-2)| W^{k-1}|^{p-4}\langle\onabla_{e_i} W^{k-1}, W^{k-1}\rangle  W^{k-1}\rangle dv_g\\
&+\int_M\sum_{i=1}^m\langle \onabla_{e_i} W^{k-1}, \lambda^2 | W^{k-1}|^{p-2} \onabla_{e_i} W^{k-1}\rangle dv_g.
\end{aligned}
\end{equation}
(i) The case of $p\geq2$. From $(\ref{branch})$, we have
 \begin{equation}\label{i-1}
 \begin{aligned}
 0\geq 
 &\int_M\sum_{i=1}^m\langle \onabla_{e_i} W^{k-1}, 2\lambda(e_i\lambda)| W^{k-1}|^{p-2} W^{k-1}\rangle dv_g\\
&+\int_M\sum_{i=1}^m\langle \onabla_{e_i} W^{k-1}, \lambda^2 | W^{k-1}|^{p-2} \onabla_{e_i} W^{k-1}\rangle dv_g.
 \end{aligned}
 \end{equation}
We consider the first term of the right hand side of $(\ref{i-1})$.
\begin{equation}\label{Using Young} 
\begin{aligned}
 &-2\int_M\sum_{i=1}^m\langle \onabla_{e_i} W^{k-1}, \lambda(e_i\lambda)| W^{k-1}|^{p-2} W^{k-1}\rangle dv_g\\
=&-2\int_M\sum_{i=1}^m\langle (e_i\lambda) | W^{k-1}|^{\frac{p}{2}-1} W^{k-1},
\lambda | W^{k-1}|^{\frac{p}{2}-1} \onabla_{e_i} W^{k-1}\rangle dv_g\\
\leq&~2\int_M|\nabla\lambda|^2| W^{k-1}|^{p}dv_g\\
&+\frac{1}{2}\int_M\lambda^2|W^{k-1}|^{p-2}|\onabla_{e_i}W^{k-1}|^2dv_g,
\end{aligned}
\end{equation}
where the inequality of $(\ref{Using Young})$, follows from the following inequality
\begin{equation}\label{Young's ineq.}
\pm2\langle V,U \rangle \leq \varepsilon |V|^2+\frac{1}{\varepsilon}|U|^2,\ \ \ \ \ \text{for all positive}\  \varepsilon >0,
\end{equation}
because of the inequality $0\leq |\sqrt{\varepsilon}V\pm \frac{1}{\sqrt{\varepsilon}}U|^2$.
The inequality $(\ref{Young's ineq.})$ is called {\em Young's inequality}.
Substituting $(\ref{Using Young})$ into $(\ref{i-1})$, we have
\begin{equation}\label{ast20}
\begin{aligned}
\int_M\lambda^2|W^{k-1}|^{p-2}|\onabla_{e_i}W^{k-1}|^2dv_g
\leq&~4\int_M|\nabla\lambda|^2 | W^{k-1}|^{p}dv_g\\
\leq&\int_M \frac{4C^2}{r^2}| W^{k-1}|^{p}dv_g.
\end{aligned}
\end{equation}
 By the assumption $\int_M| W^{k-1}|^{p}dv_g<\infty$,
  the right hand side of $(\ref{ast20})$ goes to zero and the left hand side of $(\ref{ast20})$ goes to 
  $$\int_M|W^{k-1}|^{p-2}|\onabla_{e_i}W^{k-1}|^2dv_g,$$
   since $\lambda=1$ on $B_r(x_0)$.
 Thus, we have
  $$\int_M|W^{k-1}|^{p-2}|\onabla_{e_i}W^{k-1}|^2dv_g=0.$$
   From this, we obtain for any vector field $X$ on $M$,
  \begin{equation}\label{eq. for constant0}
  \onabla_X W^{k-1}=0.
  \end{equation}
 By $(\ref{eq. for constant0})$,
  $$X| W^{k-1}|^2=2\langle \onabla_X W^{k-1}, W^{k-1}\rangle=0.$$
  Therefore we obtain $| W^{k-1}|$ is constant. 

(ii) The case of $p<2$. From $(\ref{branch})$, we have
\begin{equation}\label{ast}
\begin{aligned}
&\int_M\sum_{i=1}^m \lambda^2| W^{k-1}|^{p-2}|\onabla_{e_i} W^{k-1}|^2 dv_g\\
&=-2\int_M\sum_{i=1}^m\langle \onabla_{e_i} W^{k-1}, \lambda (e_i\lambda) | W^{k-1}|^{p-2} W^{k-1}\rangle dv_g\\
&\ \ \ -(p-2)\int_M\sum_{i=1}^m\langle \onabla_{e_i} W^{k-1},\lambda^2| W^{k-1}|^{p-4}\langle \onabla _{e_i} W^{k-1}, W^{k-1} \rangle  W^{k-1} \rangle dv_g.
\end{aligned}
\end{equation}

We shall consider the first term of the right hand side of $(\ref{ast})$.

\begin{equation}\label{1}
\begin{aligned}
-2&\int_M\sum_{i=1}^m\langle \onabla_{e_i} W^{k-1},\lambda (e_i \lambda)| W^{k-1}|^{p-2} W^{k-1}\rangle dv_g\\
=&-2\int_M\sum_{i=1}^m\langle \lambda | W^{k-1}|^{\frac{p}{2}-1} \onabla_{e_i} W^{k-1},
(e_i \lambda ) | W^{k-1}|^{\frac{p}{2}-1} W^{k-1}\rangle dv_g\\
\leq&~\varepsilon \int_M \sum_{i=1}^m \lambda^2 | W^{k-1}|^{p-2} |\onabla_{e_i} W^{k-1}|^2dv_g\\
&+\frac{1}{\varepsilon}\int_M(\nabla\lambda)^2| W^{k-1}|^{p}dv_g,
\end{aligned}
\end{equation} 
where the inequality of $(\ref{1})$ follows from Young's inequality, that is,
$$\pm2 \langle V, U\rangle \leq \varepsilon |V|^2+\frac{1}{\varepsilon}|U|^2,$$
for all positive $\varepsilon >0$,
 where we take $V= \lambda | W^{k-1}|^{\frac{p}{2}-1} \onabla_{e_i} W^{k-1} $ and $U=(e_i \lambda ) | W^{k-1}|^{\frac{p}{2}-1} W^{k-1}$. 

 We shall consider the second term of the right hand side of $(\ref{ast})$.

\begin{equation}\label{2}
\begin{aligned}
-(p-2)&\int_M\sum_{i=1}^m\langle \onabla_{e_i} W^{k-1},\lambda^2| W^{k-1}|^{p-4}\langle \onabla_{e_i} W^{k-1}, W^{k-1}\rangle  W^{k-1}\rangle dv_g\\
\leq&~(2-p)\int_M\sum_{i=1}^m|\onabla_{e_i} W^{k-1}|^2\lambda^2| W^{k-1}|^{p-2}dv_g,
\end{aligned}
\end{equation}
where the inequality of $(\ref{2})$ follows from Cauchy-Schwartz inequality.

Substituting $(\ref{1})$ and $(\ref{2})$ into $(\ref{ast})$, we have

\begin{equation*}
\begin{aligned}
&\int_M\sum_{i=1}^m\lambda^2| W^{k-1}|^{p-2}|\onabla_{e_i} W^{k-1}|^2dv_g\\
&\leq~\varepsilon \int_M\sum_{i=1}^m \lambda^2| W^{k-1}|^{p-2} |\onabla_{e_i} W^{k-1}|^2dv_g\\
&\ \ \ \ \ \ \ \ +\frac{1}{\varepsilon}\int_M(\nabla \lambda)^2| W^{k-1}|^{p}dv_g\\
&\ \ \ \ \ \ \ \ +(2-p) \int_M\sum_{i=1}^m|\onabla_{e_i} W^{k-1}|^2\lambda^2| W^{k-1}|^{p-2}dv_g.
\end{aligned}
\end{equation*}
Thus we have

\begin{equation}\label{ast2}
\begin{aligned}
(-\varepsilon-1+p)&\int_M\sum_{i=1}^m\lambda^2| W^{k-1}|^{p-2}|\onabla_{e_i} W^{k-1}|^2dv_g\\
&\leq~\frac{1}{\varepsilon}\int_M(\nabla\lambda)^2| W^{k-1}|^{p}dv_g\\
&\leq~\frac{1}{\varepsilon}\frac{C^2}{r^2}\int_M| W^{k-1}|^{p}dv_g.
\end{aligned}
\end{equation}
Since $(M,g)$ is complete, we tend $r$ to infinity.
 By the assumption $\int_M| W^{k-1}|^{p}dv_g<\infty$,
  the right hand side of $(\ref{ast2})$ goes to zero and the left hand side of $(\ref{ast2})$ goes to 
  $$(-\varepsilon-1+p)\int_M\sum_{i=1}^m| W^{k-1}|^{p-2}|\onabla_{e_i} W^{k-1}|^2dv_g,$$ since $\lambda=1$ on $B_r(x_0)$.
   Since we can take that $\varepsilon>0$ is sufficiently small, by the assumption  $1<p$, we have 
  $$\int_M\sum_{i=1}^m| W^{k-1}|^{p-2}|\onabla_{e_i} W^{k-1}|^2dv_g=0.$$
   From this, we obtain for any vector field $X$ on $M$,
  \begin{equation}\label{eq. for constant}
  \onabla_X W^{k-1}=0.
  \end{equation}
 By $(\ref{eq. for constant})$,
  $$X| W^{k-1}|^2=2\langle \onabla_X W^{k-1}, W^{k-1}\rangle=0.$$
  Therefore we obtain $| W^{k-1}|$ is constant. 
\end{proof}

Before proving our main result, we recall Gaffney's theorem (cf. \cite{mpg1}).

\vspace{5pt}

\begin{thm}
[\cite{mpg1}]\label{Gaffney}
Let $(M,g)$ be a complete Riemannian manifold. 
If a $C^1$ 1-form $\omega$ satisfies that $\int_M|\omega|dv_g<\infty$ and $\int_M(\delta \omega)dv_g<\infty$, or equivalently, a $C^1$ vector field $X$ defined by $\omega (Y)=\langle X,Y\rangle $,\ \ $(\forall ~Y\in \frak{X}(M))$ satisfies that $\int_M|X|dv_g<\infty$ and $\int_M{\rm div}(X) dv_g<\infty$, then
$$\int_M(\delta \omega)dv_g=\int_M{\rm div}(X)dv_g=0.$$
\end{thm}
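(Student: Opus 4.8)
The plan is to reduce the statement to the classical divergence theorem for compactly supported vector fields by means of a cutoff argument, in exactly the spirit of the cutoff functions $\lambda$ already used in Lemma~\ref{key lem 1}. I will work with the vector-field formulation, since the $1$-form and vector-field versions are equivalent under $\omega(Y)=\langle X,Y\rangle$; and because both integrals are to be shown to vanish, the precise sign convention relating $\delta\omega$ to ${\rm div}(X)$ is immaterial for the conclusion. Completeness of $(M,g)$ enters through the Hopf--Rinow theorem, which makes the geodesic balls $B_r(x_0)$ relatively compact and thereby guarantees the existence of the cutoff functions $\lambda$ with $0\le\lambda\le1$, $\lambda\equiv1$ on $B_r(x_0)$, $\lambda\equiv0$ off $B_{2r}(x_0)$, and $|\nabla\lambda|\le C/r$ (cf.\ \cite{ka1}).

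First I would fix $x_0\in M$ and, for each $r>0$, form the compactly supported $C^1$ vector field $\lambda X$. Since $\lambda X$ has compact support on $M$, the ordinary divergence theorem (equivalently, Stokes' theorem applied to the top form $i_{\lambda X}dv_g$) gives
\begin{equation*}
\int_M {\rm div}(\lambda X)\,dv_g=0.
\end{equation*}
Expanding by the Leibniz rule, ${\rm div}(\lambda X)=\lambda\,{\rm div}(X)+\langle\nabla\lambda,X\rangle$, so that
\begin{equation*}
\int_M \lambda\,{\rm div}(X)\,dv_g=-\int_M \langle\nabla\lambda,X\rangle\,dv_g.
\end{equation*}

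Next I would estimate the right-hand side using the gradient bound on $\lambda$ together with the fact that $\nabla\lambda$ is supported in the annulus $B_{2r}(x_0)\setminus B_r(x_0)$:
\begin{equation*}
\left|\int_M \langle\nabla\lambda,X\rangle\,dv_g\right|
\le \frac{C}{r}\int_{B_{2r}(x_0)\setminus B_r(x_0)}|X|\,dv_g
\le \frac{C}{r}\int_M|X|\,dv_g.
\end{equation*}
By the hypothesis $\int_M|X|\,dv_g<\infty$, this tends to $0$ as $r\to\infty$. On the left-hand side, $\lambda\to1$ pointwise while $|\lambda\,{\rm div}(X)|\le|{\rm div}(X)|$, so the dominated convergence theorem yields $\int_M\lambda\,{\rm div}(X)\,dv_g\to\int_M{\rm div}(X)\,dv_g$. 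Letting $r\to\infty$ in the displayed identity then forces $\int_M{\rm div}(X)\,dv_g=0$, and the $1$-form statement follows by the equivalence above.

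The step I expect to be the main obstacle is the passage to the limit on the left-hand side, since it requires reading the hypothesis ``$\int_M{\rm div}(X)\,dv_g<\infty$'' as genuine absolute integrability of ${\rm div}(X)$; under only conditional convergence the dominated convergence theorem is unavailable, and one would instead have to control the tails along an exhaustion more delicately. The remaining ingredient, namely the existence of cutoff functions with the quantitative bound $|\nabla\lambda|\le C/r$ on a complete manifold, is by contrast entirely standard.
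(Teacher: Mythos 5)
The paper contains no proof of this statement to compare against: Theorem~\ref{Gaffney} is quoted as a known result of Gaffney \cite{mpg1} and used as a black box, so your proposal should be judged on its own merits. On those merits it is correct, and it is in fact the standard modern proof of Gaffney's theorem: divergence theorem for the compactly supported field $\lambda X$, Leibniz rule, the bound $\left|\int_M\langle\nabla\lambda,X\rangle\,dv_g\right|\le \frac{C}{r}\int_{B_{2r}(x_0)\setminus B_r(x_0)}|X|\,dv_g\to 0$ (note the annulus localization alone already gives the vanishing, since the tail of the integrable function $|X|$ vanishes, even without the $1/r$ gain), and a limit on the left-hand side. Your self-diagnosed caveat is exactly the right one and is how the hypothesis should be read: Gaffney's theorem assumes genuine $L^1$ integrability of $\delta\omega$ (resp.\ $\mathrm{div}(X)$), and the paper's phrase ``$\int_M \mathrm{div}(X)\,dv_g<\infty$'' is shorthand for that; under merely conditional convergence the statement is delicate (refinements in that direction are due to Karp, not Gaffney). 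Two minor points worth tightening: the identity $\int_M \mathrm{div}(\lambda X)\,dv_g=0$ via Stokes applied to $i_{\lambda X}dv_g$ presupposes orientability, so for a general $M$ either pass to the oriented double cover or prove it directly with a partition of unity and the Riemannian density; and the distance-based cutoffs are only Lipschitz rather than $C^1$, which suffices here but deserves a word (or a smoothing step). Finally, observe that in the paper's actual application the issue you flagged evaporates: there $-\delta\omega=|W^{k-1}|^{\frac{p}{2}+1}$ is a nonnegative constant, so Fatou's lemma (or monotone convergence along an exhaustion) replaces dominated convergence, and nonnegativity of the integrand upgrades $\int_M(-\delta\omega)\,dv_g=0$ to the pointwise vanishing $W^{k-1}=0$ that the main theorem needs.
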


\vspace{5pt}

By using Lemma $\ref{key lem 1}$ and Theorem $\ref{Gaffney}$, we shall show our main theorem.

\vspace{5pt}

\begin{proof}[Proof of Theorem $\ref{main Th}$]
By Lemma ${\ref{key lem 1}}$, we have $\onabla _{X} W^{k-1}=0$ for any vector field $X$ on $M,$ and $| W^{k-1}|$ is constant.

We shall show the case $(ii)$. If ${\rm Vol}(M,g)=\infty$ and $| W^{k-1}|\not=0$, then
$$\int_M| W^{k-1}|^{p}dv_g=| W^{k-1}|^{p}{\rm Vol}(M,g)=\infty,$$
 which yields the contradiction.
 
We shall show the case $(i)$.
 Define a 1-form $\omega$ on $M$ by
$$\omega(X):=| W^{k-1}|^{\frac{p}{2}-1}\langle \onabla_{X}W^{k-2}, W^{k-1}\rangle,\ \ \ \ \ (X\in \frak{X}(M)).$$
By the assumption $\int_M|\onabla W^{k-2}|^2dv_g<\infty$ and $\int_M| W^{k-1}|^{p}dv_g<\infty$,
 we have

 \begin{equation}\label{Assumption of Gaffney 1}
 \begin{aligned}
 \int_M|\omega|dv_g
 =&\int_M\left(\sum_{i=1}^m|\omega(e_i)|^2\right)^{\frac{1}{2}}dv_g\\
 \leq&~\int_M| W^{k-1}|^{\frac{p}{2}}|\onabla W^{k-2}|dv_g\\
 \leq&\left(\int_M| W^{k-1}|^{p}dv_g\right)^{\frac{1}{2}}\left(\int_M|\onabla W^{k-2}|^2dv_g\right)^{\frac{1}{2}}<\infty.
 \end{aligned}
 \end{equation}
 We consider $-\delta \omega=\displaystyle\sum_{i=1}^m(\nabla_{e_i}\omega)(e_i)$.

 \begin{equation}
 \begin{aligned}
 -\delta \omega
 =&\sum_{i=1}^m\nabla_{e_i}(\omega(e_i))-\omega(\nabla_{e_i}e_i)\\
 =&\sum_{i=1}^m\Big\{\nabla_{e_i}\Big(| W^{k-1}|^{\frac{p}{2}-1}\langle \onabla_{e_i} W^{k-2}, W^{k-1}\rangle\Big)\\
&\hspace{30pt}-| W^{k-1}|^{\frac{p}{2}-1}\langle \onabla_{\nabla_{e_i}e_i} W^{k-2}, W^{k-1}\rangle\Big\}\\
=&\sum_{i=1}^m\Big\{ | W^{k-1}|^{\frac{p}{2}-1}\langle \onabla_{e_i} \onabla_{e_i} W^{k-2},  W^{k-1}\rangle\\
&\hspace{30pt}-| W^{k-1}|^{\frac{p}{2}-1}\langle \onabla_{\nabla_{e_i}e_i} W^{k-2},  W^{k-1}\rangle\Big\}\\
=&\sum_{i=1}^m\Big\{ | W^{k-1}|^{\frac{p}{2}-1}\langle \olapla W^{k-2},  W^{k-1}\rangle\Big\}\\
=&| W^{k-1}|^{\frac{p}{2}+1},
 \end{aligned}
 \end{equation}
where the third equality follows from $| W^{k-1}|$ is constant and $\onabla_X W^{k-1}=0, ~~~(X\in \frak{X}(M)).$
 Since $| W^{k-1}|$ is constant and $\int_M| W^{k-1}|^{p}dv_g<\infty$, the function $-\delta\omega$ is also integrable over $M$. 
 From this and $(\ref{Assumption of Gaffney 1})$, we can apply Gaffney's theorem for the $1$-form
 $\omega.$
  Therefore we have
$$0=\int_M(-\delta \omega)dv_g=\int_M| W^{k-1}|^{\frac{p}{2}+1}dv_g,$$
which implies that $ W^{k-1}=0.$

\end{proof}

\vspace{5pt}

\begin{proof}[Proof of Corollary $\ref{main Cor}$]
By using Theorem $\ref{main Th}$,  we have $W^{k-1}=0$. By repeating this procedure we have $W^{k-2}=0$, $W^{k-3}=0$, $\cdots.$
 Finally, we obtain $\tau(\p)=W^1=0.$
\end{proof}






\quad\\

\bibliographystyle{amsbook}

\end{document}